\newtheorem{theorem}{Theorem}[section]
\newtheorem{lemma}[theorem]{Lemma}
\newcommand{\mbb}{\mathbb}
\newcommand{\Pro}{\mbb{P}}
\newcommand{\Ex}{\mbb{E}}
\renewcommand{\P}{\mathcal{P}}
\tikzstyle{aNode} = [circle, fill = black]
\tikzstyle{bNode} = [circle,draw = black, thick]
\newcommand{\pcherry}[1]{%
\begin{tikzpicture}[inner sep = 1pt, #1]%
\node (1) at (0,-2) [aNode]{};
\node (3) at (1.5,-2) [aNode]{};
\node (2) at (0.75,-1) [aNode]{};
\draw  (1) -- (2);
\draw  (2) -- (3);
\end{tikzpicture}%
}
\newcommand{\ppoints}[1]{%
\begin{tikzpicture}[inner sep = 1pt, #1]%
\node (1) at (0,-2) [aNode]{};
\node (3) at (1.5,-2) [aNode]{};
\node (2) at (0.75,-1) [aNode]{};
\end{tikzpicture}%
}
\newcommand{\pedge}[1]{%
\begin{tikzpicture}[inner sep = 1pt, #1]%
\node (1) at (0,-2) [aNode]{};
\node (3) at (1.5,-2) [aNode]{};
\node (2) at (0.75,-1) [aNode]{};
\draw  (1) -- (3);
\end{tikzpicture}%
}
\def\cherry{\pcherry{scale=0.13}}
\def\points{\ppoints{scale=0.13}}
\def\edge{\pedge{scale=0.14}}
\begin{document}


\title{Hamiltonicity in Cherry-quasirandom 3-graphs} 
\thanks{JH is partially supported by Simons Collaboration Grant for Mathematicians \#630884.}
\author{Luyining Gan}
\author{Jie Han}
\address{Department of Mathematics and Statistics, Auburn University, Auburn, AL, 36849, USA. Email: \tt{lzg0027@auburn.edu}}
\address{Department of Mathematics, University of Rhode Island, Kingston, RI, 02881, USA. Email: \tt{jie\_han@uri.edu}}
\date{\today}

%
%

\begin{abstract}

We show that for any fixed $\alpha>0$, cherry-quasirandom 3-graphs of positive density and sufficiently large order $n$ with minimum vertex degree $\alpha \binom n2$ have a tight Hamilton cycle.
This solves a conjecture of Aigner-Horev and Levy.
\end{abstract}
\maketitle

\section{Introduction}
The study of Hamilton cycles is a central topic in graph theory with a long and profound history. 
In recent years, researchers have worked on extending the classical theorem of Dirac on Hamilton cycles to hypergraphs 
and we refer to~\cite{BHS, GPW, HZ1, HZ2, RR, BMSSS1, BMSSS2, RRRSS, HZ_forbidHC, HHZ_cycle} for some recent results and to \cite{KuOs14ICM, RR, zsurvey} for excellent surveys  on this topic.

In this paper we restrict ourselves to 3-uniform hypergraphs (3-graphs), where each (hyper)edge contains exactly three vertices.
For a 3-graph $H$ and a vertex set $S\subseteq V(H)$, $\deg_H(S)$ is defined to be the number of edges containing $S$.
The \emph{minimum codegree $\delta _{2} (H)$} of $H$ is the minimum of $\deg_{H} (S)$ over all pairs $S$ of vertices in $H$, and the \emph{minimum degree $\delta_1(H)$} of $H$ is the minimum of $\deg_{H} (v)$ over all vertices $v\in V(H)$.
A $3$-graph $C$ is called a \emph{tight cycle} if its vertices can be ordered cyclically such that every 3 consecutive vertices in this ordering define an edge of $C$, which implies that every two consecutive edges intersect in two vertices.
We say that a $3$-graph contains a \emph{tight Hamilton cycle} if it contains a tight cycle as a spanning subgraph. 
A tight path $P$ has a sequential order of vertices $v_1v_2\dots v_{p-1}v_p$ such that every 3 consecutive vertices form an edge, where the ends of $P$ are ordered pairs $(v_2, v_1)$ and $(v_{p-1}, v_p)$.

The study of quasirandom graphs has been a fruitful area since introduced in \cite{Chung1989,Thomason1987a,Thomason1987b}, and we recommend the readers to the excellent survey~\cite{Krivelevich2006}.
However, the canonical definitions for quasirandom hypergraphs (extending~\cite{Chung1989}) have been completely settled only recently~\cite{Horev2018, Towsner2017}.
In this note we focus on the so-called `cherry-quasirandom 3-graphs' defined as follows.
An $n$-vertex 3-graph $H$ is called $(\rho,d)_{\cherry}$-dense if
\[
e_H(\vec{G}_1, \vec{G}_2):= |\{(x,y,z)\in \P_2(\vec{G}_1, \vec{G}_2): \{x, y, z\}\in E(H)\}| \ge d|\P_2(\vec{G}_1, \vec{G}_2)| - \rho n^3
\]
for every $\vec G_1,\vec G_2 \subseteq V(H) \times V(H)$, where
\[
\P_2(\vec{G}_1, \vec{G}_2) : = \{(x,y,z)\in V(H)^3: (x,y)\in \vec G_1, (y,z)\in \vec G_2\}.
\]

Aigner-Horev and Levy proved the following result on tight Hamiltonicity in $(\rho,d)_{\cherry}$-dense 3-graphs.
\begin{theorem}\cite{Horev}\label{thm:cod}
For every $d,\alpha \in (0,1]$, there exist an integer $n_0$ and a real $\rho >0$ such that the following holds for all $n \geq n_0$. 
Let $H$ be an $n$-vertex $(\rho,d)_{\cherry}$-dense $3$-graph satisfying $\delta_2(H) \geq \alpha n$. Then, $H$ has a tight Hamilton cycle. 
\end{theorem}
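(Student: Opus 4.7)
The plan is to establish Theorem~\ref{thm:cod} via the absorption method, which has become standard for Hamilton-type problems in dense hypergraphs. The architecture has four modules: a \emph{connecting lemma}, an \emph{absorbing path}, a \emph{reservoir set}, and an \emph{almost-spanning tight path cover}, which are then assembled into a tight Hamilton cycle.

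First I would prove the connecting lemma: for any two ordered pairs $(a_1,a_2)$ and $(b_1,b_2)$ of ``typical'' vertices and any small set $W \subseteq V(H)$ to be avoided, there is a tight path of bounded length from $(a_1,a_2)$ to $(b_1,b_2)$ whose internal vertices lie outside $W$. The cherry-quasirandomness is tailor-made for this: extending a tight path by one vertex at an end $(x,y)$ amounts to choosing $z$ with $\{x,y,z\}\in E(H)$, and the quantity $e_H(\vec G_1, \vec G_2)$ counts exactly such cherries $(x,y,z)$, where $\vec G_2$ encodes the allowed transitions $y\to z$. Iterating a bounded number of such extensions while preserving typicality of the current end pair, then matching into the prescribed other end, gives the desired connection. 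The codegree bound $\delta_2(H)\ge \alpha n$ ensures that almost all pairs are typical, and the $-\rho n^3$ slack in the cherry-quasirandomness definition allows us to forbid $W$ and any earlier failure sets.

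Next I would build the absorbing path and the reservoir. Call a short tight path $A$ an \emph{absorber} for a vertex $v$ if there is a tight path on $V(A)\cup\{v\}$ with the same end pairs as $A$; cherry-quasirandomness gives $\Omega(n^{c})$ absorbers per vertex, with many sharing any given typical end pair. A standard probabilistic selection, combined with the connecting lemma to chain the chosen gadgets together, yields a single tight path $P_A$ of length at most $\varepsilon n$ that can absorb any $U \subseteq V(H)\setminus V(P_A)$ with $|U|\le \beta n$. An analogous random choice produces a reservoir $R$ of size $\rho' n$ such that any two typical end pairs can be joined by a short tight path through $R$, even after an arbitrary small portion of $R$ has been used. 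The remainder is routine: on $V(H) \setminus (V(P_A)\cup R)$, greedily grow tight paths with typical end pairs until only $o(n)$ vertices are left uncovered, splice all pieces and $P_A$ into one tight cycle through the reservoir via the connecting lemma, and finally use $P_A$ to swallow the leftover vertices.

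The main obstacle is the connecting lemma and, beneath it, the issue of propagating typicality under tight-path extension. One has to argue that after each extension step the new end pair $(y,z)$ still supports the cherry-quasirandomness count in a useful way, which requires carefully choosing the auxiliary directed graphs $\vec G_1, \vec G_2$ at each step (e.g.\ restricting $\vec G_1$ to pairs with the current $y$ as source and $\vec G_2$ to transitions into vertices with large codegree onto the opposite end). Once this iterated application of the quasirandom count is in place, both the absorbing and connecting gadgets follow from the same template, and the global assembly is standard.
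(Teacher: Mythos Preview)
Theorem~\ref{thm:cod} is not proved in this paper; it is quoted from Aigner-Horev and Levy~\cite{Horev} as background, and the paper's own contribution is Theorem~\ref{thm:main}. So there is no proof in the paper to compare your proposal against.

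That said, your outline is consistent with the brief description the paper gives in its ``Proof ideas'' subsection: the absorption method, split into an absorber lemma, a connection lemma, and a path-cover lemma. The paper notes that under the codegree hypothesis $\delta_2(H)\ge\alpha n$ \emph{every} pair has good codegree, and that this together with cherry-denseness lets one run the \emph{cascade} technique for the connection lemma (imported here as Lemma~\ref{lem:connecting} from~\cite{Horev}). Your ``propagation of typicality under tight-path extension'' is exactly what the cascades handle, so your identification of the main obstacle is correct, though your description of how to overcome it is vaguer than the actual argument. Your reservoir set plays the same role as the random set $A$ in the paper's proof of Theorem~\ref{thm:main}. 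In short: right architecture, but since the paper does not itself supply a proof of Theorem~\ref{thm:cod}, there is nothing further to compare.
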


They also showed that for $\alpha+d>1$ the $(\rho,d)_{\cherry}$-denseness together with $\delta_1(H) \geq \alpha \binom n2$ implies tight Hamiltonicity and asked~\cite[Conjecture 1.6]{Horev} if the condition $\alpha+d>1$ can be dropped.
In this note we verify this conjecture.

\begin{theorem}\label{thm:main}
For every $\alpha,d \in (0,1]$ there exist an $n_0$ and $\rho>0$ such that the following holds for all $n \geq n_0$. 
Let $H$ be an $n$-vertex $(\rho,d)_{\cherry}$-dense $3$-graph satisfying $\delta_1(H) \geq \alpha \binom n2$. 
Then, $H$ has a tight Hamilton cycle. 
\end{theorem}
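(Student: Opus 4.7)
The plan is to follow the standard absorption strategy used for Theorem~\ref{thm:cod}, but adapted to the weaker vertex-degree hypothesis by working exclusively with \emph{typical} ordered pairs. The three ingredients I aim to establish are: an \emph{absorbing-path lemma} producing a tight path $P_{\mathrm{abs}}$ of size $o(n)$ with typical end-pairs, such that any small set of leftover vertices $W$ can be absorbed into a tight path on $V(P_{\mathrm{abs}}) \cup W$ with the same ordered ends; a \emph{reservoir lemma} producing a set $R$ of size $\gamma n$ such that any two typical ordered pairs can be linked by a short tight path with internal vertices in $R$; and an \emph{almost-cover lemma} giving a tight path on $V(H)\setminus(V(P_{\mathrm{abs}})\cup R)$ that misses only a $\beta^2$-fraction of the remaining vertices and has typical ends. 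Chaining $P_{\mathrm{abs}}$ and the almost-cover path through $R$, and then absorbing the unused vertices into $P_{\mathrm{abs}}$, produces the tight Hamilton cycle.

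The enabling observation is extracted directly from the cherry-quasirandomness. Applying the defining inequality with $\vec G_1 = V(H)\times V(H)$ and $\vec G_2$ taken to be the set of ordered pairs $(y,z)$ whose codegree is less than $\beta n$, one obtains that there are at most $\rho n^2/(d-\beta)$ such \emph{atypical} pairs; the remaining \emph{typical} pairs thus have codegree $\Omega(n)$. A further bounded-depth iteration of the same inequality shows that between any two typical ordered pairs there are $\Omega(n^{\ell})$ tight paths of some fixed length $\ell$, uniformly in the endpoints. This is exactly what is needed for the reservoir and connection lemmas, which can then be obtained by a random selection argument in the spirit of~\cite{Horev}, crucially restricting all tracked ends to be typical.

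The principal obstacle is the absorbing-path lemma. In the codegree setting of Theorem~\ref{thm:cod} one automatically obtains $\Omega(n^{\ell})$ absorbers for every vertex $v$ at some constant length $\ell$; here a positive fraction of pairs may have zero codegree, so absorbers must be constructed based at typical pairs rather than at arbitrary ones. For each $v$ the strategy is: first use $\delta_1(H) \geq \alpha \binom{n}{2}$ together with the atypical-pair bound to produce $\Omega(n)$ pairs $(a,b)$ with $\{v,a,b\} \in E(H)$ and $(a,b)$ typical, then grow $(a,b)$ via cherry-quasirandomness into a constant-length tight path containing $v$ that admits an alternative ``rerouted'' tight path on the same vertex set minus $v$ with identical ordered end-pairs. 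A standard Chernoff-type random selection then packs linearly many vertex-disjoint such absorbers and connects them through $R$ into a single $P_{\mathrm{abs}}$ with typical overall ends. Once this lemma is in place, the remainder of the argument is a direct adaptation of~\cite{Horev}.
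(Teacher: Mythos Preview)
Your proposal is correct and follows the same absorption framework as the paper: bound atypical pairs via cherry-quasirandomness, produce $\Omega(n^4)$ absorbers for every vertex from $\delta_1$ plus quasirandomness (this is exactly Horev's Lemma~4.2, restated here as Lemma~\ref{lem:Horev}), and connect typical end-pairs via cascades (Lemma~\ref{lem:connecting}). The paper differs from your plan in two implementation choices worth noting. First, rather than tracking typical pairs directly, it applies a \emph{shaving} lemma (Lemmas~\ref{lem:strongdense} and~\ref{lem:shave}) to pass to a spanning subgraph $H'$ in which every pair has codegree either $0$ or at least $dn/3$; this clean dichotomy is precisely the hypothesis of the connection lemma as stated in~\cite{Horev}, so no new connection argument is needed, whereas your ``bounded-depth iteration'' would require reproving a variant of it. Second, instead of a universal absorbing path built by Chernoff-type random selection of an absorber family, the paper fixes a random set $A$ in advance, greedily assembles one absorber per vertex of $A$ into a path $P_0$ (Lemma~\ref{lem:abspath}), and then arranges that all vertices left uncovered at the end lie in $A$; this same $A$ also serves as your reservoir $R$. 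Your route is the classical R\"odl--Ruci\'nski--Szemer\'edi absorption and works equally well; the paper's variant trades the random-family argument for a single random set playing both roles, and lets it invoke the Horev connection lemma off the shelf via shaving.
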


There are weaker versions of quasirandomness for 3-graphs compared with $\cherry$-denseness, namely, $\edge$-denseness and $\points$-denseness.
An $n$-vertex 3-graph $H$ is called $(\rho,d)_{\points}$-dense if
\[
e_H(X, Y, Z):= |\{(x,y,z)\in X \times Y\times Z : \{x, y, z\}\in E(H)\}| \ge d|X| |Y| |Z| - \rho n^3
\]
for every $X, Y, Z\subseteq V(H)$;
an $n$-vertex 3-graph $H$ is called $(\rho,d)_{\edge}$-dense if
\[
e_H(X, G):= |\{(x,(y,z))\in X \times G : \{x, y, z\}\in E(H)\}| \ge d|X| |G| - \rho n^3
\]
for every $X\subseteq V(H)$ and $G\subseteq V(H) \times V(H)$.
It is known that the $\cherry$-denseness in Theorems~\ref{thm:cod} and~\ref{thm:main} cannot be replaced by either of these two weaker ones -- namely, degenerate choices of $\alpha$ and $d$ do not guarantee tight Hamiltonicity under these two notions of quasirandomness.
In this sense, the $\cherry$-denseness in these two theorems is best possible.
In contrast, for a weaker notion of Hamiltonicity, namely, the \emph{loose cycles}, Lenz, Mubayi and Mycroft~\cite{Lenz2016} proved that degenerate choices of $\alpha$ and $d$ already force loose Hamiltonicity under $\points$-denseness.
Very recently, Ara\'ujo, Piga and Schacht~\cite{Araujo2019} annouced that for any $\alpha>0$ and $d> 1/4$, having minimum vertex degree $\alpha \binom n2$ and being $(\rho,d)_{\edge}$-dense guarantee tight Hamiltonicity.

\subsection{Proof ideas}
Let us briefly talk about our proof here.
Following other recent work on Hamilton cycles, we use the absorption method, which roughly splits the proof into the following three steps.
Let $H$ be an $n$-vertex $(\rho,d)_{\cherry}$-dense $3$-graph satisfying $\delta_1(H) \geq \alpha \binom n2$. 

\begin{itemize}
\item Absorber lemma: every vertex $v$ in $H$ has many \emph{absorbers}, namely, a constant-length tight path that can include $v$ as an interior vertex or leave $v$ out;
\item Connection lemma: every two ordered pairs of vertices can be connected by a constant-length tight path;
\item Path cover lemma: almost all vertices of the 3-graph can be covered by a constant number of vertex-disjoint tight paths.
\end{itemize}

It is straightforward to prove the path cover lemma for quasirandom 3-graphs.
The proof of Theorem~\ref{thm:cod} relies on the fact that \emph{all} pairs of vertices have a good codegree (namely, $\alpha n$), which, together with the cherry-denseness, allows them to employ the \emph{cascade} method to establish a connection lemma.
Our main advance is to observe that as $H$ is $\cherry$-dense, \emph{almost all} pairs of vertices of $H$ have a good codegree.
Moreover, a `shaving' technique (e.g., \cite[Lemma 8.8]{Han2017}) shows that we can find a spanning subgraph of $H$ where almost all pairs have codegree $dn/3$ and other pairs have degree 0.
These allow us to stick to these high codegree pairs and employ the cascades for connections, and actually such a result has been proven in~\cite[Lemma 3.23]{Horev}.

\section{Tools}
In this section we prove the lemmas needed for the proof of Theorem~\ref{thm:main}.
Note that by definition, if $H$ is an $n$-vertex $(\rho,d)_{\cherry}$-dense $3$-graph, then any induced subgraph of $H$ on $\alpha n$ vertices is $(\rho/\alpha^3,d)_{\cherry}$-dense and we will use this simple fact without further references.

Throughout the rest of the paper, we will refer to tight paths as just paths.
Given a 3-graph $H$, let $\partial H := \{(x,y): \deg_H(xy) >0\}$ be the \emph{shadow} of $H$.
%
For any $v\in A$, define $\deg_H(v, A): = \big| N_H(v) \cap \binom A2 \big|$ and for each pair of vertices $x, y\in A$, let
$\deg_{H}(xy, A) : = | N_{H}(x, y) \cap A |$.

We use the following result proved in~\cite[Lemma 8.8]{Han2017}.
Note that its original version does not include an estimate on the loss of the number of edges which actually follows from its proof. 

\begin{lemma} \cite{Han2017} \label{lem:strongdense}
Let $n\ge 6$ and $0<\mu, \theta <1$.
Let $H$ be an $n$-vertex $3$-graph with $\deg_{H}(S)\ge \mu(n-2)$ for all but at most $\theta \binom{n}{2}$ pairs $S$. 
Then $H$ contains a spanning subgraph $H'$ with $e(H \setminus H') \le 48\theta^{1/4} \binom n3$ and either $\deg_{H'}(S)\ge (\mu-8\theta^{1/4})(n-2)$ or $\deg_{H'}(S) = 0$.
Moreover, $|\partial H'|\ge (1-\theta - \theta^{1/4})\binom n2$, namely, the number of $S$ with $\deg_{H'}(S)= 0$ is at most $(\theta+\theta^{1/4})\binom n2$.
\end{lemma}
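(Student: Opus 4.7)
The plan is a two-step shaving argument. Let $B := \{S \in \binom{V(H)}{2} : \deg_H(S) < \mu(n-2)\}$; by hypothesis $|B| \le \theta\binom{n}{2}$. For each $v \in V(H)$ write $d_B(v) := |\{S \in B : v \in S\}|$, so that $\sum_v d_B(v) = 2|B| \le \theta n^2$.

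In Step~1 I would delete from $H$ every edge that contains some pair in $B$. This removes at most $|B|(n-2) \le 3\theta\binom{n}{3}$ edges and forces every pair in $B$ to have codegree zero. The key observation is that for a surviving pair $S = \{x,y\} \notin B$, the deleted edges through $S$ all have the form $\{x,y,z\}$ with $\{x,z\} \in B$ or $\{y,z\} \in B$, so the codegree loss at $S$ is at most $d_B(x) + d_B(y)$.

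In Step~2 I would set $T := 2\theta^{3/4}n$ and $U := \{v : d_B(v) \ge T\}$, so that $|U| \le 2|B|/T \le \theta^{1/4}n/2$, and then delete every edge incident to $U$ from the current graph. This costs at most $|U|\binom{n-1}{2}$ further edges, and together with Step~1 removes at most $48\theta^{1/4}\binom{n}{3}$ edges in total. Let $H'$ be the resulting graph. Any pair lying in $B$ or meeting $U$ has $\deg_{H'}(S) = 0$, accounting for at most $|B| + |U|(n-1) \le (\theta + \theta^{1/4})\binom{n}{2}$ pairs; for any other pair $S = \{x,y\}$ we have $\deg_H(S) \ge \mu(n-2)$, and the total codegree loss is at most $d_B(x) + d_B(y) + |U| \le 2T + \theta^{1/4}n/2 \le 8\theta^{1/4}(n-2)$ when $n \ge 6$.

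The only real issue is balancing the three error bounds through the choice of $T$: $T$ must be large enough that $|U|$ is $O(\theta^{1/4}n)$ (controlling both the $\partial H'$ bound and the Step~2 edge count), yet small enough that the Step~1 codegree loss $2T$ is also $O(\theta^{1/4}n)$. The scaling $T = \Theta(\theta^{3/4}n)$ achieves the balance $T \cdot |U| \asymp |B|$ and satisfies all three constraints simultaneously; the remainder is routine book-keeping.
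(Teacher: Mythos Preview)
The paper does not prove this lemma; it is quoted from \cite{Han2017} (Lemma~8.8), with only the remark that the edge-count bound also follows from that proof. Your two-step shaving argument is exactly the standard way such a statement is established and is almost certainly what the original reference does: first delete all edges sitting on a bad pair, then delete all edges touching a vertex that lies in too many bad pairs, and observe that every surviving pair lost little codegree. Your bookkeeping for the edge count, the shadow bound, and the codegree loss is correct, and the threshold $T = 2\theta^{3/4}n$ lies in the admissible window $\Omega(\theta^{3/4}n) \le T \le O(\theta^{1/4}n)$ where all three constraints are met simultaneously.

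One small caveat: your derivation of the shadow bound implicitly uses $(\mu - 8\theta^{1/4})(n-2) > 0$, so that the ``other'' pairs are genuinely in $\partial H'$. When $\mu \le 8\theta^{1/4}$ your $H'$ need not satisfy this, but in that degenerate regime the either-or conclusion is vacuous and one may simply take $H' = H$: every pair outside $B$ then has positive codegree, giving $|\partial H'| \ge (1-\theta)\binom{n}{2}$ directly. This is purely cosmetic and irrelevant to any application in the paper, where $\mu = d/2$ and $\theta = 2\rho/d$ with $\rho$ chosen so that $8\theta^{1/4} \ll \mu$.
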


The following lemma defines a spanning subgraph of an $n$-vertex $3$-graph $H$, which will be crucial in our proof.
We note that in this lemma $\edge$-denseness is sufficient.

\begin{lemma}\label{lem:shave}
Let $d>0$ and $\rho \le d^5/(3^{40} 2^{5})$.
Let $H$ be an $n$-vertex $(\rho,d)_{\cherry}$-dense $3$-graph.
Then there exists a spanning subgraph $H'$ of $H$, satisfying the following properties.
\begin{enumerate}[label=(\arabic*)]
\item For any pair $S$ of vertices, either $\deg_{H'}(S)\ge d n/3$ or $\deg_{H'}(S)= 0$. 
Moreover, $|\partial H'|\ge (1-\rho^{1/5})\binom n2$, namely, the number of $S$ with $\deg_{H'}(S)= 0$ is at most $\rho^{1/5} \binom{n}{2}$. \label{item:1}
\item $H'$ is $(\rho^{1/5},d)_{\cherry}$-dense. \label{item:2}
\end{enumerate}
\end{lemma}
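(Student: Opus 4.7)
The plan is to derive this lemma directly from Lemma~\ref{lem:strongdense}; the only genuinely new input is to observe that $\cherry$-density already forces almost every pair of vertices to have large codegree in $H$. Let $B$ denote the set of unordered pairs $\{x,y\}$ with $\deg_H(xy) < dn/2$, and let $\vec B := \{(x,y), (y,x) : \{x,y\}\in B\}$, so $|\vec B| = 2|B|$. I would apply the defining inequality of $(\rho,d)_{\cherry}$-denseness with $\vec G_1 = \vec B$ and $\vec G_2 = V(H) \times V(H)$: then $|\P_2(\vec G_1, \vec G_2)| = |\vec B|\cdot n$, while
\[
e_H(\vec G_1, \vec G_2) \;=\; \sum_{(x,y)\in \vec B} \deg_H(xy) \;<\; |\vec B|\cdot\tfrac{dn}{2},
\]
so the $\cherry$-bound $e_H \ge d|\vec B|n - \rho n^3$ forces $|B| < \rho n^2/d$.

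With this in hand I would invoke Lemma~\ref{lem:strongdense} with $\mu := d/2$ (so $\mu(n-2)\le dn/2$ for $n\ge 6$) and $\theta := 2\rho/d$, obtaining a spanning subgraph $H'$ with: (i) every pair $S$ has $\deg_{H'}(S) = 0$ or $\deg_{H'}(S) \ge (\mu - 8\theta^{1/4})(n-2)$; (ii) $e(H\setminus H') \le 48\theta^{1/4}\binom{n}{3}$; (iii) the number of $S$ with $\deg_{H'}(S) = 0$ is at most $(\theta+\theta^{1/4})\binom{n}{2}$. The assumed bound $\rho \le d^5/(3^{40}\cdot 2^5)$ is more than enough to guarantee $(\mu - 8\theta^{1/4})(n-2) \ge dn/3$ and $\theta + \theta^{1/4} \le \rho^{1/5}$, which jointly give~\ref{item:1}. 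For~\ref{item:2}, I would use that each unordered edge of $H$ is counted in at most six ordered triples of $\P_2$, so for any $\vec G_1, \vec G_2$,
\[
e_H(\vec G_1,\vec G_2) - e_{H'}(\vec G_1,\vec G_2) \;\le\; 6\,e(H\setminus H') \;\le\; 48\theta^{1/4}\, n^3,
\]
and combining with the $\cherry$-density of $H$ yields $e_{H'}(\vec G_1,\vec G_2) \ge d|\P_2(\vec G_1,\vec G_2)| - (\rho + 48\theta^{1/4}) n^3$; a direct estimate then shows $\rho + 48\theta^{1/4} \le \rho^{1/5}$ under the stated hypothesis on $\rho$.

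The main obstacle is really bookkeeping: the whole argument funnels through the single scale $\theta \asymp \rho/d$, and the exponent $1/5$ in the conclusion is precisely what makes the three demands—codegree threshold $dn/3$ after a loss of $8\theta^{1/4}(n-2)$, fewer than $\rho^{1/5}\binom{n}{2}$ zero-codegree pairs, and a transferred $\cherry$-density parameter at most $\rho^{1/5}$—simultaneously compatible. Verifying that $\rho \le d^5/(3^{40}\cdot 2^5)$ absorbs the accumulated $\theta^{1/4}\sim (\rho/d)^{1/4}$ error terms into $\rho^{1/5}$ without losing any of the three target estimates is the delicate part of the calculation.
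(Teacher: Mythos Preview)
Your approach mirrors the paper's proof exactly: bound the set of low-codegree pairs via $\cherry$-density, apply Lemma~\ref{lem:strongdense} with $\mu=d/2$ and $\theta=2\rho/d$, and transfer the $\cherry$-density to $H'$ by subtracting the edge loss. The only difference is that you (correctly) insert the ordered-triple factor of $6$ in part~\ref{item:2}, which the paper silently omits; with that factor, though, your final estimate $\rho + 48\theta^{1/4}\le \rho^{1/5}$ just barely fails at the stated threshold $\rho=d^5/(3^{40}2^5)$ (one would need roughly $48\le 3^2$), so either the constant in the hypothesis must be tightened or the factor absorbed as the paper implicitly does---this is purely bookkeeping, not a conceptual gap.
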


\begin{proof}
Let $\mathcal{S}$ be the collection of pairs with $\deg_{H}(S) < d (n-2)/2$.
Let $\vec{\mathcal{S}} := \{(x, y): xy\in \mathcal{S}\}$. 
So $|\vec{\mathcal S}|=2|\mathcal S|$.
Since $H$ is $(\rho,d)_{\cherry}$-dense, we have
\[
\frac{d(n-2)}{2} \cdot 2 |{\mathcal S}| \ge e_H({\vec{\mathcal S}}, V(H)^2)   \ge d |\P_2( {{\vec{\mathcal S}}, V(H)^2)| - \rho n^3  = 2 d |\mathcal S}| n - \rho n^3.
\]
The above inequalities imply that
\[
 |{\mathcal S}| \le \frac{\rho n^3}{d(n+2)} \le \frac{\rho n(n-1)}{d}=\frac{2\rho}{d}\binom{n}{2}.
 \]
 
%
Let $H'$ be the spanning subgraph of $H$ returned by Lemma \ref{lem:strongdense} with $\mu = d/2$ and $\theta = 2\rho/d$. 
So we have $e(H \setminus H') \le 48({2\rho}/{d})^{1/4} \binom n3$, $|\partial H'|\ge (1-2\rho/d - ({2\rho}/{d})^{1/4})\binom n2\ge (1-\rho^{1/5})\binom n2$ and 
\[
\deg_{H'}(S)\ge (d/2 - 8( {2\rho}/{d})^{1/4})(n-2) \ge (d/2-d/7)(n-2) \ge dn/3.
\]
Thus, \ref{item:1} holds.

Since $H$ is $(\rho,d)_{\cherry}$-dense, for every $\vec G_1,\vec G_2 \subseteq V(H) \times V(H)$,
we have
\begin{align*}
e_{H'}(\vec G_1,\vec G_2) & \ge e_{H}(\vec G_1,\vec G_2) - e(H \setminus H')\ge d |\P_2(\vec G_1, \vec G_2)| - \rho n^3 -48({2\rho}/{d})^{1/4} \binom n3 \\
&\ge d |\P_2(\vec G_1, \vec G_2)| - (\rho + 8({2\rho}/{d})^{1/4})n^3 \ge d |\P_2(\vec G_1, \vec G_2)| - \rho^{1/5} n^3.
\end{align*}
Thus $H'$ is $(\rho^{1/5},d)_{\cherry}$-dense.
\end{proof}

Let $H$ be a $3$-graph. 
For $v \in V(H)$, a quadruple $(x,y,z,w) \in V(H)^4$ is said to be a $v$-{\em absorber} if 
$\{x,y,z\},\{y,z,w\}, \{v,x,y\},\{v,y,z\},\{v,z,w\} \in E(H)$.
We state and use \cite[Lemma 4.2]{Horev} (in a weaker form) and refine the absorbers it gives in the next lemma.

\begin{lemma} \cite[Lemma 4.2]{Horev} \label{lem:Horev}
For every $\alpha, d \in (0,1]$, there exist $\rho >0$ and $c>0$ such that the following holds for any sufficiently large integer $n$.
Let $H$ be an $n$-vertex $(\rho,d)_{\cherry}$-dense $3$-graph satisfying $\delta_1(H) \geq \alpha \binom{n}{2}$, and let $v \in V(H)$. Then, there are at least 
$
c n^4
$
$v$-absorbers in $H$.
\end{lemma}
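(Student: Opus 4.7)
The plan is to extract the $cn^4$ $v$-absorbers from two applications of the $\cherry$-density hypothesis, chained through the link graph. Write $L:=L_v$ for the link graph of $v$, so that $|L|=\deg_H(v)\ge \alpha\binom{n}{2}$, and set $\vec L:=\{(a,b)\in V(H)^2:ab\in L\}$, which is symmetric. A $v$-absorber $(x,y,z,w)$ is exactly a walk $x$--$y$--$z$--$w$ in $L$ together with the $H$-edges $\{x,y,z\},\{y,z,w\}$, so for $(y,z)\in\vec L$ I set
\[
T(y,z):=|\{x:(x,y)\in\vec L,\{x,y,z\}\in E(H)\}|,\qquad S(y,z):=|\{w:(z,w)\in\vec L,\{y,z,w\}\in E(H)\}|,
\]
and the number of $v$-absorbers is precisely $\sum_{(y,z)\in\vec L}T(y,z)S(y,z)$; my task is to show this sum is $\Omega(n^4)$.

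First I would apply $\cherry$-density with $\vec G_1=\vec G_2=\vec L$. Since $|\P_2(\vec L,\vec L)|=\sum_y d_L(y)^2\ge (2|L|)^2/n\ge \alpha^2 n^3/4$ by Cauchy--Schwarz, this gives $\sum_{(y,z)\in\vec L}T(y,z)=e_H(\vec L,\vec L)\ge c_1 n^3$ with $c_1:=d\alpha^2/8$, provided $\rho$ is sufficiently small. The identity $S(y,z)=T(z,y)$, combined with the symmetry of $\vec L$, yields the same bound for $\sum S$. A routine averaging (trivially $S\le n$) then shows that the set $\vec B:=\{(y,z)\in\vec L:S(y,z)\ge c_1n/2\}$ has size at least $c_2n^2$ with $c_2:=c_1/2$.

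The second application, with $\vec G_1=\vec L$ and $\vec G_2=\vec B$, is the decisive one, and I expect the main obstacle to lie here: the quantity $|\P_2(\vec L,\vec B)|=\sum_y d_L(y)\,q_y$, with $q_y:=|\{z:(y,z)\in\vec B\}|$, cannot be bounded below using only the size bound $\sum q_y=|\vec B|$, since the mass of $\vec B$ might in principle sit on vertices of tiny $L$-degree. The observation that unlocks the argument is that $\vec B\subseteq \vec L$ forces $q_y\le d_L(y)$, and hence $d_L(y)q_y\ge q_y^2$; Cauchy--Schwarz then gives $\sum q_y^2\ge |\vec B|^2/n\ge c_2^2 n^3$. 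Therefore $e_H(\vec L,\vec B)\ge c_3 n^3$ for small $\rho$, and since every triple $(x,y,z)$ counted here has $(y,z)\in\vec B$, it extends to at least $c_1n/2$ valid $w$'s completing a $v$-absorber. Summation then produces $\sum_{(y,z)\in\vec L}T(y,z)S(y,z)\ge (c_1n/2)\cdot c_3 n^3=\Omega(n^4)$, as desired.
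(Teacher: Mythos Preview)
Your argument is correct. Note, however, that the paper does not supply its own proof of this lemma: it is quoted (in a slightly weakened form) from \cite[Lemma~4.2]{Horev} and used as a black box, so there is no in-paper argument to compare against. Your proof---two applications of $\cherry$-density chained through the link graph $L_v$, with the decisive inequality $d_L(y)q_y\ge q_y^2$ coming from the containment $\vec B\subseteq\vec L$---is a clean self-contained derivation that could stand in place of the citation. One cosmetic remark: your count $\sum_{(y,z)\in\vec L}T(y,z)S(y,z)$ permits the degeneracy $x=w$, but there are at most $n^3$ such tuples, which is negligible against the $\Omega(n^4)$ lower bound (and the paper's definition of a $v$-absorber as a quadruple in $V(H)^4$ does not explicitly exclude it anyway).
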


\begin{lemma} \label{lem:absorber}
For every $\alpha, d\in (0,1]$, there exists $\rho>0$ such that the following holds for any sufficiently large integer $n$.
Let $H$ be an $n$-vertex $(\rho,d)_{\cherry}$-dense $3$-graph satisfying $\delta_1(H)\ge \alpha \binom{n}{2}$, and let $H'$ be a spanning subgraph of $H$ satisfying Lemma \ref{lem:shave} \ref{item:1}. 
Let $v \in V(H)$.
Then, for any $W \subseteq V(H)$ with $|W| \le \alpha n /4$, there exists a $v$-absorber $v_1v_2v_3v_4$ in $V(H)\setminus W$ such that $\deg_{H'}(v_1v_2) \ge d n/3$ and $\deg_{H'}(v_3v_4) \ge d n/3$.
\end{lemma}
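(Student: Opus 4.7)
My strategy is to apply Lemma~\ref{lem:Horev} to an induced subgraph of $H$ that essentially avoids $W$, and then discard the few resulting absorbers that fail the codegree condition on $v_1v_2$ or $v_3v_4$.

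First I would set $V' := (V(H)\setminus W)\cup\{v\}$, noting that $|V'| \ge (1-\alpha/4)n \ge 3n/4$. The induced subhypergraph $H[V']$ inherits cherry-quasirandomness with a slightly worse constant: it is $(\rho',d)_{\cherry}$-dense with $\rho' \le (4/3)^3 \rho \le 3\rho$, by the observation stated at the beginning of Section~2. A direct degree calculation using $\delta_1(H)\ge \alpha\binom{n}{2}$ and $|W|\le \alpha n/4$ gives
\[
\deg_{H[V']}(u) \ge \alpha\binom{n}{2} - |W|(n-1) \ge \frac{\alpha}{2}\binom{n}{2} \ge \frac{\alpha}{2}\binom{|V'|}{2},
\]
so $\delta_1(H[V'])\ge (\alpha/2)\binom{|V'|}{2}$.

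Next I would apply Lemma~\ref{lem:Horev} with parameters $(\alpha/2,d)$ to $H[V']$, which requires $\rho$ to be chosen small enough at the outset that $3\rho$ lies below Horev's threshold for those parameters. This yields at least $c|V'|^4 \ge c'n^4$ $v$-absorbers in $H[V']$, for some $c'=c'(\alpha,d)>0$. Each such absorber is a quadruple in $(V')^4$, uses only edges of $H[V']\subseteq H$, and is automatically disjoint from $\{v\}$ because the edge conditions $\{v,v_1,v_2\}, \{v,v_2,v_3\}, \{v,v_3,v_4\}\in E(H)$ in the definition of a $v$-absorber force $v\notin\{v_1,v_2,v_3,v_4\}$. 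Since $V'\setminus\{v\}\subseteq V(H)\setminus W$, every such absorber already lies in $V(H)\setminus W$.

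It then remains to discard absorbers for which $v_1v_2$ or $v_3v_4$ is a pair with $\deg_{H'}=0$. By Lemma~\ref{lem:shave}~\ref{item:1}, the number of such pairs is at most $\rho^{1/5}\binom{n}{2}$, so at most $2\rho^{1/5}\binom{n}{2}\cdot n^2 \le \rho^{1/5} n^4$ absorbers are bad for this reason. Choosing $\rho$ small enough that $\rho^{1/5}<c'/2$ leaves at least one surviving absorber, and for that absorber each of $v_1v_2$ and $v_3v_4$ satisfies $\deg_{H'}\ge dn/3$ by the dichotomy in Lemma~\ref{lem:shave}~\ref{item:1}.

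The only subtlety is parameter bookkeeping: $\rho$ must be chosen small enough to simultaneously satisfy the hypothesis of Lemma~\ref{lem:shave}, Horev's threshold for parameters $(\alpha/2,d)$, and $\rho^{1/5}<c'/2$. Since each of these conditions depends only on $\alpha,d$ and is monotone in $\rho$, a suitable $\rho=\rho(\alpha,d)$ can be fixed once and for all; no conceptual obstacle remains.
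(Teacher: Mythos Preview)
Your proposal is correct and follows essentially the same approach as the paper: restrict to $H[(V(H)\setminus W)\cup\{v\}]$, verify the cherry-denseness and minimum-degree hypotheses there, apply Lemma~\ref{lem:Horev} with parameters $(\alpha/2,d)$ to obtain $\Omega(n^4)$ $v$-absorbers, and then subtract the at most $2\rho^{1/5}\binom{n}{2}n^2$ absorbers whose first or last pair lies outside $\partial H'$. Your added remark that the edge conditions force $v\notin\{v_1,v_2,v_3,v_4\}$ is a helpful clarification the paper leaves implicit.
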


\begin{proof}
Apply Lemma \ref{lem:Horev} with $\alpha/2$ and $d$, and obtain $\rho'>0$ and $c>0$.
Let $\rho = \min\{ c^5(1-\alpha/4)^{20}, \rho'(1-\alpha/4)^3, d^5/(3^{40} 2^{5})\}$, $H_1 := H[(V(H)\setminus W)\cup \{v\}]$ and denote its order by $n_1 (\ge n - \alpha n/4)$.
Then $H_1$ is $(\rho',d)_{\cherry}$-dense and
\[
\delta_1(H_1) \ge \delta_1(H) - \frac{\alpha n}{4} (n-1) \ge \alpha \binom{n}{2} - \frac{\alpha}{2} \binom{n}{2} \ge \frac{\alpha}{2} \binom{n_1}{2} .
\]
By Lemma \ref{lem:Horev}, there are at least $c n_1^4$ $v$-absorbers in $H_1$.
By Lemma \ref{lem:shave} \ref{item:1}, $|\partial H'|\ge (1-\rho^{1/5})\binom n2$.
The desired $v$-absorber exists because
\[
c n_1^4 - 2 \rho^{1/5}\binom n2n^2\ge  c(n - \alpha n/4)^4 - \rho^{1/5}n^3(n-1) > 0. \qedhere
\]
\end{proof}

We use the following connection lemma from~\cite{Horev}.

\begin{lemma} \cite[Lemma 3.23]{Horev} \label{lem:connecting}
For every $d, \beta \in (0,1]$ with $\beta < d$, there exist an integer $n_0 >0$ and a real $\rho_0>0$ such that the following holds for all $n \ge n_0$ and $0<\rho<\rho_0$. Let $H$ be an $n$-vertex $(\rho,d)_{\cherry}$-dense $3$-graph and let $H'$ be a spanning subgraph of $H$ such that for any $x, y$ of $V(H)$, either $\deg_{H'}(xy)=0$ or $\deg_{H'}(xy) \ge \beta n$. Let $(x, y)$ and $(x', y')$ be two disjoint ordered pairs of vertices such that both $x y$ and $x' y'$ are in $\partial H'$. Then, there exists a $10$-vertex path in $H$ connecting $(x, y)$ and $(x', y')$.
\end{lemma}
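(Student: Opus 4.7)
My plan is to use the cascade method: grow forward- and backward-reachable ordered-pair sets from $(x, y)$ and $(x', y')$ via tight paths in $H'$, and then bridge the two sets using a short middle piece whose existence is forced by the cherry-density of $H$. The central enabling observation is a closure property of $H'$: since codegrees in $H'$ are either $0$ or at least $\beta n$, every edge $\{a, b, c\} \in E(H')$ automatically places all three pairs $\{a, b\}, \{b, c\}, \{a, c\}$ in $\partial H'$ (each has codegree at least $1$, hence $\geq \beta n$). Thus any cascade extending only along $E(H')$ stays inside $\partial H'$ and never has fewer than $\beta n$ choices at the next step.

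Concretely, I would define the forward cascade by $R_0 = \{(x, y)\}$ and $R_{k+1} = \{(b, c) : \exists (a, b) \in R_k,\ \{a, b, c\} \in E(H')\}$, and analogously the backward cascade $R'_k$ starting from $(y', x')$. A direct double count (each $(a, b) \in R_k$ contributes at least $\beta n$ extensions, and a given $(b, c)$ is reached from at most $n$ parents) yields $|R_k|, |R'_k| \geq \beta^k n^2$ for every $k \geq 2$, and every reached pair lies in $\partial H'$.

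For the bridge, I would look for a 10-vertex path of the form $x, y, p_3, \ldots, p_8, x', y'$ consisting of a forward 5-vertex half ending at $(p_4, p_5) \in R_3$, a backward 5-vertex half corresponding to $(p_7, p_6) \in R'_3$, and the two middle $H$-edges $\{p_4, p_5, p_6\}$ and $\{p_5, p_6, p_7\}$. Applying the $(\rho, d)_{\cherry}$-denseness of $H$ with $\vec G_1 = R_3$ and $\vec G_2 = V \times V$ lower-bounds the number of triples $(p_4, p_5, p_6)$ with $(p_4, p_5) \in R_3$ and $\{p_4, p_5, p_6\} \in E(H)$ by $d|R_3|n - \rho n^3 = \Omega(n^3)$, producing an $\Omega(n^2)$-size set $S$ of reachable pairs $(p_5, p_6)$. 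A second application of cherry-density with $\vec G_1 = S$ and $\vec G_2 = \mathrm{rev}(R'_3) := \{(p_6, p_7) : (p_7, p_6) \in R'_3\}$ would then produce the bridging cherry $\{p_5, p_6, p_7\}$; vertex-disjointness of the resulting $10$ vertices would follow from the $\Omega(n^6)$ slack in the count.

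The main obstacle is this second cherry-density application: the bound $e_H(S, \mathrm{rev}(R'_3)) \geq d\,|\mathcal{P}_2(S, \mathrm{rev}(R'_3))| - \rho n^3$ is only useful if $|\mathcal{P}_2|$ is of order $n^3$, which requires the projections of $S$ and $\mathrm{rev}(R'_3)$ onto the middle vertex $p_6$ to overlap on a positive fraction of $V$ --- a priori both could be supported on disjoint small subsets of $V$. I would handle this by either (i) cascading one or two extra steps on each side so that the reachable sets spread across almost all of $\partial H'$, thereby automatically forcing the overlap, or (ii) adjusting the split to $k = m = 4$, meeting directly at a single pair $(p_5, p_6) \in R_4 \cap \mathrm{rev}(R'_4)$, after first showing via cherry-density that $R_4$ covers a sufficiently large fraction of $\partial H'$. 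This spreading step is where the full strength of cherry-density (as opposed to a weaker $\edge$- or $\points$-denseness) is actually used.
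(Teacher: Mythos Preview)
The present paper does not prove this lemma: it is quoted as a black box from Aigner--Horev and Levy \cite[Lemma~3.23]{Horev}, as the sentence introducing it makes explicit. There is therefore no proof in this paper against which to compare your attempt.

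That said, your outline is precisely the cascade argument the authors allude to in Section~1.1, and your central structural observation --- that the codegree dichotomy in $H'$ forces every edge of $H'$ to have all three of its pairs in $\partial H'$, so an $H'$-cascade never stalls and gains a factor $\beta n$ at every step --- is exactly why one passes to the shaved subgraph in the first place. Your growth bound $|R_k|\ge \beta^k n^2$ for $k\ge 2$ is correct.

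You have also correctly isolated the one genuinely nontrivial point: the bridging application of $(\rho,d)_{\cherry}$-density can be vacuous unless the middle-vertex projections of the forward and backward reachable sets overlap on a positive fraction of $V$, and $|R_3|,|R'_3|=\Omega(n^2)$ alone does not guarantee this. Your proposed fix~(i), spreading the reachable sets until they cover almost all ordered pairs, is indeed how this is handled. The mechanism you are missing is short: if $T:=V^2\setminus R_{k+1}$ denotes the pairs \emph{not} reached after one further $H$-step, then $e_H(R_k,T)=0$ by definition of $R_{k+1}$, and cherry-density then forces $|\mathcal P_2(R_k,T)|\le(\rho/d)n^3$; unpacking this shows that almost every vertex in $\pi_2(R_k)$ has nearly full out-degree into $R_{k+1}$. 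Two such $H$-cascade iterations starting from $|R_2|\ge\beta^2 n^2$ already push the reachable set up to $(1-o_\rho(1))n^2$, after which the forward and backward sets must intersect, and one recovers exactly a $10$-vertex path (two $H'$-steps and two $H$-steps on each side). Your sketch stops just short of writing this spreading step out, but all the right ingredients are present.
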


%

Now we are ready to prove our absorption lemma.

\begin{lemma} \label{lem:abspath}
For every $\alpha, d\in (0,1]$, there exists $\rho>0$ such that the following holds for any sufficiently large integer $n$.
Let $H$ be an $n$-vertex $(\rho,d)_{\cherry}$-dense $3$-graph satisfying $\delta_1(H)\ge \alpha \binom{n}{2}$ and let $H'$ be a spanning subgraph of $H$ satisfying Lemma \ref{lem:shave} \ref{item:1}.
Then for any $A \subseteq V(H)$ with $|A| \le dn/66$, there exists {a path} $P$ of length at most $10|A|$ such that both ends of $P$ are in $\partial H'$, and for any $A' \subseteq A$, there is a tight path $P'$ on $V(P) \cup A'$ which has the same ends as $P$.
\end{lemma}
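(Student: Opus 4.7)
The plan is the standard ``absorption path'' construction: reserve one absorber per vertex of $A$, then chain them together into a single tight path using short connectors supplied by Lemma~\ref{lem:connecting}. Each absorber produced by Lemma~\ref{lem:absorber} has end-pairs of $H'$-codegree at least $dn/3$, hence lying in $\partial H'$, which is precisely the hypothesis Lemma~\ref{lem:connecting} requires to connect them.

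Enumerate $A=\{u_1,\dots,u_m\}$ with $m=|A|\le dn/66$. In Phase~1 I build pairwise disjoint absorbers $B_i=(v_1^i,v_2^i,v_3^i,v_4^i)$ iteratively. Having produced $B_1,\dots,B_{i-1}$, apply Lemma~\ref{lem:absorber} to $H$, $H'$, the vertex $u_i$, and the forbidden set $W_i:=A\cup\bigcup_{j<i}V(B_j)$, whose size is at most $5m$. Taking $\rho$ sufficiently small relative to $\alpha$ and $d$ (so that $5m\le\alpha n/4$), this returns a $u_i$-absorber $B_i\subseteq V(H)\setminus W_i$ with $\deg_{H'}(v_1^iv_2^i), \deg_{H'}(v_3^iv_4^i)\ge dn/3$.

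In Phase~2 I link the $B_i$ into one tight path via $10$-vertex connectors. Inductively, suppose $Q_i$ is a tight path whose vertex set is $V(B_1)\cup\cdots\cup V(B_i)$ together with the previous connectors, and whose right end-pair is $(v_3^i,v_4^i)\in\partial H'$. Let $U_i$ be the union of $V(Q_i)$, $V(B_{i+1})\cup\cdots\cup V(B_m)$, and $A$, with the four active endpoints $v_3^i,v_4^i,v_1^{i+1},v_2^{i+1}$ removed. Since $|U_i|=O(m)=O(dn/66)$, the induced subgraph $H[V(H)\setminus U_i]$ is still $(\rho',d)_{\cherry}$-dense for some small $\rho'$, and the two end-pairs of interest retain codegree in $H'$ at least $dn/3-|U_i|\ge \beta n$ for some fixed $\beta<d$. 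Lemma~\ref{lem:connecting} then supplies a $10$-vertex tight path in $H\setminus U_i$ joining $(v_3^i,v_4^i)$ to $(v_1^{i+1},v_2^{i+1})$; appending it to $Q_i$ gives $Q_{i+1}$.

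Set $P:=Q_m$. A vertex count gives $|V(P)|\le 4m+6(m-1)\le 10m$, and both end-pairs of $P$ lie in $\partial H'$ by construction. For any $A'\subseteq A$, form $P'$ from $P$ by replacing, at each index $j$ with $u_j\in A'$, the subpath $v_1^jv_2^jv_3^jv_4^j$ by the longer subpath $v_1^jv_2^ju_jv_3^jv_4^j$, which is a tight path by the very definition of a $u_j$-absorber and has the same end-pairs. Then $V(P')=V(P)\cup A'$ and $P'$ shares its ends with $P$, as required. The main technical obstacle is the bookkeeping: one must verify that at each of the $O(m)$ invocations of Lemmas~\ref{lem:absorber} and~\ref{lem:connecting}, the induced subgraph still satisfies the hypotheses on cherry-density and on reserved-pair codegree. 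Because each step removes only $O(m)=O(dn)$ vertices, both parameters degrade by a controlled amount, so that choosing $\rho$ small enough (depending on $\alpha$ and $d$) and $n$ large enough makes the whole construction go through.
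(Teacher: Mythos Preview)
Your proposal is correct and follows essentially the same two-phase construction as the paper: greedily pick disjoint absorbers for the vertices of $A$ via Lemma~\ref{lem:absorber}, then iteratively connect consecutive absorbers using Lemma~\ref{lem:connecting} applied in the induced subgraph avoiding previously used vertices (the paper makes the constants explicit, taking $\beta=d/6$ and checking $|V(H_1)|\ge 5n/6$ so that $H_1$ is $(2\rho,d)_{\cherry}$-dense). One small slip: your parenthetical ``so that $5m\le\alpha n/4$'' is not something that choosing $\rho$ small achieves---this inequality must come from the hypothesis $|A|\le dn/66$ together with the relation between $\alpha$ and $d$, not from $\rho$.
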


\begin{proof}
Apply Lemma \ref{lem:absorber} with $d$ and $\alpha$ and obtain $\rho_1$. Apply Lemma \ref{lem:connecting} with $\beta = d/6$ and obtain $\rho_2$. Let $\rho = \min \{\rho_1, \rho_2/2, d^5/(3^{40} 2^{5})\}$.
We first choose disjoint absorbers for each $v\in A$.
Let $W$ be the union of $A$ and the absorbers that have been chosen so far.
Then, for each vertex in $A$, we iteratively use Lemma \ref{lem:absorber} to find a $v$-absorber $v_1v_2v_3v_4$ in $V(H)\setminus W$ such that $\deg_{H'}(v_1v_2) \ge dn/3$ and $\deg_{H'}(v_3v_4) \ge dn/3$. 

Next, we iteratively connect these absorbers by Lemma \ref{lem:connecting} to a single tight path. At each intermediate step, let $Q$ be the union of $A$ and all the paths that have been chosen so far and suppose we need to connect $(v_3, v_4)$ and $(v'_2, v'_1)$. Define $H_1 := H[ (V(H)\setminus Q) \cup \{ v_3, v_4, v'_1, v'_2\}]$ and $H'_1 := H'[ (V(H)\setminus Q) \cup \{ v_3, v_4, v'_1, v'_2\}]$.
As all the connections will be done by Lemma \ref{lem:connecting}, we have $|V(H_1)| \ge n- |Q|\ge n-11|A|\ge 5n/6$, which implies that $H_1$ is $(2 \rho , d)_{\cherry}$-dense. 
For those $x, y$ with $\deg_{H'}(xy) =0 $, we have $\deg_{H'_1}(xy) = 0$;
for those $x, y$ with $\deg_{H'}(xy) \ge d n/3$, we have 
\[
\deg_{H'_1}(xy) \ge \deg_{H'}(xy) - 11 |A|  \ge dn/6.
\]
That is, for any $x, y$ of $V(H_1)$, either $\deg_{H'_1}(xy) = 0$ or $\deg_{H'_1}(xy) \ge dn/6$. In particular, $\deg_{H'_1}(v_3v_4) \ge d n /6$ and $\deg_{H'_1}(v'_2v'_1) \ge d n /6$. 
By Lemma \ref{lem:connecting}, there exists a $10$-vertex path in $H_1$ connecting $(v_3, v_4)$ and $(v'_2, v'_1)$. 
In conclusion, we obtain {a path} $P$ of length no more than $10|A|$.
For any $A' \subseteq A$, we can put each vertex of $A'$ into its absorber, which is an interior path of $P$. This results a tight path $P'$ on $V(P) \cup A'$ which has the same ends as $P$.
\end{proof}

At last, we use a path cover lemma given in~\cite{Horev}.
In fact the $\points$-denseness is sufficient but we state it in terms of ${\cherry}$-dense to unify the statement of the lemmas.

\begin{lemma} \cite[Lemma 1.13]{Horev} \label{lem:pathcover}
For every $d,\zeta \in (0,1]$,  there exist $n_0 := n_0(d,\zeta)$,  $\rho_0 = \rho_0(d,\zeta) >0$, and an integer $l_0= l_0(d,\zeta)$ such that the following holds for all $n \geq n_0$ and $0<\rho < \rho_0$. 
Let $H$ be an $n$-vertex $(\rho,d)_{\cherry}$-dense $3$-graph. 
Then, all but at most $ \zeta n$ vertices of $H$ can be covered using at most $l_0$ vertex-disjoint paths. 
\end{lemma}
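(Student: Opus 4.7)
The plan is to extract tight paths one at a time from the current uncovered vertex set $U_i$, ensuring that each extracted path covers at least half of $U_i$, so that after at most $l_0 = \lceil \log_2(1/\zeta) \rceil + O(1)$ rounds the residual uncovered set has size at most $\zeta n$. The output of the procedure is then the required bounded family of vertex-disjoint tight paths.

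For a single extraction round on $U_i$ with $|U_i| \ge \zeta n$, I would first apply Lemma~\ref{lem:shave} to $H[U_i]$, which is still $(\rho/\zeta^3, d)_{\cherry}$-dense, obtaining a spanning subgraph $H'_i$ in which every pair of vertices has codegree either $0$ or at least $d|U_i|/3$, with the `zero' set $\mathcal{Z}_i$ of pairs of size at most $(\rho/\zeta^3)^{1/5}\binom{|U_i|}{2}$. Then I would build a tight path greedily in $H'_i$: choose a starting edge whose terminal ordered pair lies outside $\mathcal{Z}_i$, and iteratively extend by one vertex as long as the current terminal pair has positive codegree in $H'_i$. Each such extension is available thanks to the high-codegree bound of Lemma~\ref{lem:shave}\ref{item:1}; the only obstruction is when the terminal pair enters $\mathcal{Z}_i$. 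If this happens, I would apply Lemma~\ref{lem:connecting} (with $\beta = d/3$) to bridge past the obstruction with a $10$-vertex detour through $H$, choosing the detour's endpoint pair to lie in $\partial H'_i$ so that the greedy process can resume.

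The main obstacle is controlling the total length of the resulting path, because each detour consumes $10$ fresh vertices while each direct extension consumes only one. The key point is that $|\mathcal{Z}_i|$ is of strictly smaller order than the total number of extensions, so the vast majority of extensions are direct; a careful averaging argument, tracking how often the terminal pair of an extending path lands in $\mathcal{Z}_i$, shows that the greedy-plus-detour process continues until the residue inside $U_i$ shrinks to at most $|U_i|/2$. One subtle point is to arrange the initial edge and the endpoints of each detour to come from $\partial H'_i$, which is possible since $|\partial H'_i| \ge (1-\rho^{1/5})\binom{|U_i|}{2}$ leaves an overwhelming majority of candidate pairs for Lemma~\ref{lem:connecting}. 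Iterating the single-path extraction at most $l_0$ times then yields the cover.
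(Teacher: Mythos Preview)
The paper does not give its own proof of this lemma: it is quoted verbatim from \cite{Horev}, and the remark following it (that $\points$-denseness already suffices) indicates that the original argument goes through the weak hypergraph regularity machinery, using only the weakest notion of quasirandomness. Your route is genuinely different: you are trying to exploit the stronger $\cherry$-denseness via Lemmas~\ref{lem:shave} and~\ref{lem:connecting} in place of any regularity lemma.

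Your iterative extraction scheme is sound, but the mechanism you describe for a single round is both over-engineered and, in the form you state it, broken. The detour step cannot be executed as written: Lemma~\ref{lem:connecting} requires \emph{both} endpoint pairs to lie in $\partial H'$, so if your terminal pair has fallen into $\mathcal{Z}_i$ you are precisely in the situation where the connecting lemma does not apply from that pair. Your ``averaging argument'' therefore does not get off the ground.

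The good news is that the detour mechanism is never needed. If you extend greedily \emph{inside} $H'_i$, then whenever the current terminal pair $(b,c)$ satisfies $\deg_{H'_i}(bc)\ge d|U_i|/3$, any extension vertex $w$ with $bcw\in E(H'_i)$ automatically yields a new terminal pair $cw\in\partial H'_i$ (because $bcw$ itself witnesses $\deg_{H'_i}(cw)>0$), and hence $\deg_{H'_i}(cw)\ge d|U_i|/3$ again by Lemma~\ref{lem:shave}\ref{item:1}. So the terminal pair can never enter $\mathcal{Z}_i$; the greedy process runs until the set of used vertices has size at least $d|U_i|/3$, at which point you have a tight path covering a $d/3$-fraction of $U_i$. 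Two adjustments then remain: each round shrinks $U_i$ by a factor $1-d/3$ rather than $1/2$, so $l_0$ should be of order $(1/d)\log(1/\zeta)$; and you need $\rho_0$ small enough (of order $\zeta^3$ times the constant in Lemma~\ref{lem:shave}) so that Lemma~\ref{lem:shave} applies to $H[U_i]$ down to $|U_i|\ge\zeta n$. With these fixes your alternative proof goes through, and it is pleasantly regularity-free compared with the cited original.
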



\section{Proof of Theorem~\ref{thm:main}}

Here is a brief sketch of the proof.
We first choose a random set $A$, which will be used to deal with the leftover vertices from the almost path cover and connect all paths to a tight cycle.
Next, we apply Lemma~\ref{lem:abspath} to find an absorbing path $P_0$ for $A$, and apply Lemma \ref{lem:pathcover} to find a constant number of paths that leaves a set $U$ of vertices uncovered.
Using vertices in $A$, we put each vertex in $U$ into disjoint 5-vertex paths, which can be done by applying Lemma~\ref{lem:abspath} on $H[A\cup U]$.
Now we connect all these paths together into a tight cycle $C$, leaving only some vertices in $A$ outside $V(C)$.
Finally the uncovered vertices of $A$ will be absorbed by $P_0$ (as $P_0$ is an interior path of $C$) and we obtain a tight Hamilton cycle of $H$.

Now we start our proof.
Given $\alpha, d \in (0,1]$,
let $\sigma = \min \{\frac1{132}, \frac d{33}\}$ and $\zeta = \min\{ \frac{\alpha \sigma}{72}, \frac{d\sigma}{4320}\}$.
Apply Lemma \ref{lem:absorber} with $\alpha/18$ in place of $\alpha$, $d/6$ in place of $d$ and obtain $\rho_1$.
Apply Lemma \ref{lem:connecting} with $d$ and $\beta = d/20$ and obtain $\rho_2$.
Apply Lemma \ref{lem:abspath} with $\alpha, d$ and obtain $\rho_3$.
Apply Lemma \ref{lem:pathcover} with $d, \zeta$ and obtain $\rho_4$ and $l_0$.
Let $\rho = \min\{\rho_1\sigma^{10}/2^{10}, \rho_2 \sigma^3/27, \rho_3, \rho_4^5/32, d^5/(3^{40} 2^{5})\}$ and $n_0$ be sufficiently large.
Let $n\ge n_0$ and $H$ be an $n$-vertex $(\rho,d)_{\cherry}$-dense $3$-graph.
Let $H'$ be the spanning subgraph of $H$ given by Lemma \ref{lem:shave} satisfying \ref{item:1} and \ref{item:2}.

\medskip \noindent \textbf{Choose a random set $A$.} First, we pick a random set $A$ by including every vertex of $H$ independently with probability $\sigma$. Then, we have $\Ex (|A|) = \sigma n$.
Chernoff's inequality (see \cite[Corollary 2.3]{Janson}) and the above expectation yield that
\[
\Pro ( |A|> 2 \sigma n) = o(1), \ \ \Pro ( |A|  <\sigma n/2) = o(1).
\]
Moreover, by Janson's inequality (see \cite[Theorem 2.14]{JLR}), for any $v, x, y\in V(H)$, we have
\[
\Pro (\deg_H(v, A)< \deg_H(v) \sigma^2/2) \le 1/{n^3}, \text{ and}
\]
\[
\Pro (\deg_{H'}({xy, A}) < \deg_{H'}(xy) \sigma/2) \le 1/{n^3}.
\]
In summary, by the union bound, there exists a choice of $A$ such that
\begin{enumerate} [label=(\roman*)]
\item $ \sigma n/2 \le |A| \le 2\sigma n$, \label{romanitem:1}
\item $\deg_H(v, A) \ge \deg_H(v) \sigma^2 /2 \ge \frac{\alpha \sigma^2}{2}\binom n2$ for every $v\in V(H)$, and \label{romanitem:2}
\item either $\deg_{H'}({xy, A}) \ge \deg_{H'}(xy) \sigma/2\ge {\sigma d n}/6$ or $\deg_{H'}({xy}) =0$ for every $x, y \in V(H)$. \label{romanitem:3}
\end{enumerate}

\medskip \noindent \textbf{Pick {an absorbing path} and an almost path cover.} 
By Lemma \ref{lem:abspath}, there exists {a path} $P_0$ of length no more than $10|A|$ such that both ends $(a_0, b_0)$ and $(c_0, d_0)$ of $P_0$ are in $\partial H'$, and for any $A' \subseteq A$, there is a tight path $P'_0$ on $V(P_0) \cup A'$ which has the same ends as $P_0$.

Define $H'' := H'[ V(H)\setminus (V(P_0) \cup A)]$.
Since $|V(H'')| \ge n- |V(P_0) \cup A|\ge n-11|A|\ge 5n/6$, $H''$ is $(2 \rho^{1/5} , d)_{\cherry}$-dense.
By Lemma \ref{lem:pathcover}, all but at most $ \zeta n$ vertices of $H''$ can be covered using $l \le l_0$ vertex-disjoint paths $P_1, P_2, \dots, P_l$. 
Let $U$ be the set of uncovered vertices of $H''$.
Let $(a_i, b_i)$ and $(c_i, d_i)$ be the ends of $P_i$ for $i \in [l]$.

\medskip 
\noindent \textbf{Put vertices of $U$ into short paths.} Define $A^* := A\cup U\cup \{a_i, b_i, c_i, d_i\}_{0\le i\le l}$.
Then, we have
\[
|A^*| = |A| +|U| +| \{a_i, b_i, c_i, d_i\}_{0\le i\le l}| \le 2\sigma n +\zeta n +4(l+1) \le 3\sigma n.
\]
Since the induced subgraph $H[A^*]$ has at least $|A|\ge \sigma n/2$ vertices, $H[A^*]$ is $(8\rho/\sigma^3, d)_{\cherry}$-dense.
By \ref{romanitem:2}, for every $v \in A^*$, we have 
\[
\deg_{H[A^*]}(v) \ge \deg_{H}(v, A) \ge \frac{\alpha \sigma^2}{2}\binom n2 \ge \frac{\alpha}{18}\binom {|A^*|}{2}.
\]
Let $H'[A^*]$ be the spanning subgraph of $H[A^*]$. For $x, y \in A^*$, by \ref{romanitem:3}, we have either
\[
\deg_{H'[A^*]}(xy) \ge \deg_{H'}(xy, A) \ge {\sigma d n}/6 \ge \frac{d}{18} {|A^*|},
\]
or $\deg_{H'}({xy}) =0$.
Moreover, the number of $xy$ with $\deg_{H'}({xy}) =0$ is at most
\[
\rho^{1/5} \binom n2 \le \rho^{1/5} \binom {2|A^*|/\sigma}{2} \le \frac{4\rho^{1/5} }{\sigma^2}\binom {|A^*|}{2},
\]
as $\sigma n/2 \le |A^*|$.
We apply Lemma \ref{lem:absorber} on $H[A^*]$ with $\alpha/18$ in place of $\alpha$, $d/6$ in place of $d$, and $H'[A^*]$ playing the role of $H'$, and conclude that for any $W \subseteq A^*$ with $|W| \le \alpha |A^*| /72$ and any $v \in U$, there exists a $v$-absorber $v_1v_2v_3v_4$ in $A^*\setminus W$ such that $\deg_{H'[A^*]}(v_1v_2) \ge d |A^*|/18$ and $\deg_{H'[A^*]}(v_3v_4) \ge d |A^*|/18$.

We greedily choose disjoint paths $Q_v = v_1v_2vv_3v_4$ for each $v\in U$ (clearly, a $v$-absorber gives such a path) such that $\deg_{H'[A^*]}(v_1v_2) \ge d |A^*|/18$ and $\deg_{H'[A^*]}(v_3v_4) \ge d |A^*|/18$.
Let $W$ be the union of $U \cup \{a_i, b_i, c_i, d_i\}_{0\le i\le l}$ and the paths that have been chosen so far.
Note that $|W| \le |U|+4(l+1)\le \zeta n +4l +4\le \alpha |A^*| /72$.
Then, for every vertex $v\in U$, we can use the property above to find a $v$-absorber $v_1v_2v_3v_4$ in $A^*\setminus W = A\setminus W$ such that $\deg_{H'[A^*]}(v_1v_2) \ge d|A^*|/18$ and $\deg_{H'[A^*]}(v_3v_4) \ge d|A^*|/18$. 

\medskip 
\noindent \textbf{Connect the paths and finish the absorption.} 
Next, we iteratively connect these paths $P_0, P_1, \dots,P_l$, and $\{Q_v: v \in U\}$  to a tight cycle. 
At each intermediate step, let $Q$ be the vertex set of the union of all the paths that have been chosen so far and suppose we need to connect two ends $(z_1, z_2)$ and $(w_1, w_2)$. 
Because we will connect these paths by Lemma \ref{lem:connecting}, we have 
\[
|A^* \cap Q|\le 5\zeta n +4l +4+6(\zeta n +l+1)\le 12\zeta n \le 24(\zeta/\sigma)|A^*| \le |A^*|/10.
\]
Define $H_1 := H[ (A^*\setminus Q) \cup \{ z_1, z_2, w_1, w_2\}]$ and $H'_1 := H'[ (A^*\setminus Q) \cup \{ z_1, z_2, w_1, w_2\}]$.
Since 
\[|V(H_1)| \ge |A^*|- |A^* \cap Q| \ge (9/10)|A^*| \ge 9\sigma n/20 > \sigma n/3,
\]
$H_1$ is $(27\rho/\sigma^3 , d)_{\cherry}$-dense. 
Recall that for $x, y \in A^*$, either $\deg_{H'[A^*]}(xy)\ge \frac{d}{18} {|A^*|}$ or $\deg_{H'}({xy}) =0$.
For those $x, y$ with $\deg_{H'}(xy) =0 $, we have $\deg_{H'_1}(xy) = 0$; otherwise,
\[
\deg_{H'_1}(xy) \ge \deg_{H'[A^*]}(xy) - |A^* \cap Q|  \ge \frac{d}{18} {|A^*|} - 24(\zeta/\sigma)|A^*| \ge \frac{d}{20} {|A^*|}.
\]
That is, for any $x, y$ of $A^*$, either $\deg_{H'_1}(xy) = 0$ or $\deg_{H'_1}(xy) \ge d|A^*|/20$. 
In particular, $\deg_{H'_1}(z_1z_2) \ge d|A^*|/20$ and $\deg_{H'_1}(w_1w_2) \ge d|A^*|/20$. 
By Lemma \ref{lem:connecting}, there exists a $10$-vertex path in $H_1$ connecting $(z_1, z_2)$ and $(w_1, w_2)$. 
In conclusion, we obtain a tight cycle that covers all vertices in $V(H)\setminus A$.
As the uncovered vertices are all in $A$ and can be absorbed by $P_0$, we obtain a Hamilton cycle and the proof is completed.

\bibliographystyle{abbrv}
\bibliography{refs,Bibref}

\end{document}